\newcommand{\vect}[1]{\mathbf{#1}}
\newcommand{\abs}[1]{{\mathopen\mid}{#1}{\mathclose\mid}}
\newtheorem{theorem}{Theorem}[section]
\newtheorem{remark}[theorem]{Remark}
\newtheorem{corollary}[theorem]{Corollary}
\begin{document}

\title{Exercises on the Kepler ellipses through a fixed point in space,
after Otto Laporte}

\author
{Gert Heckman\\
Radboud University Nijmegen}

\date{Dedicated to Tom Koornwinder on the occasion \\ of his 80th birthday}

\maketitle

\begin{abstract}
This article has a twofold purpose. On the one hand I would like to draw 
attention to some nice exercises on the Kepler laws, due to Otto Laporte 
from 1970. Our discussion here has a more geometric flavour than the 
original analytic approach of Laporte. 

On the other hand it serves as an addendum to a paper of mine from 1998  
on the quantum integrability of the Kovalevsky top. Later  I learned 
that this integrability result had been obtained already long before by 
Laporte in 1933.
\end{abstract}

\section{Introduction}
\label{Introduction}

In the first decade of this century Maris van Haandel and I taught
for several years a master class for high school students on the 
Kepler laws of planetary motion. 
The proof that the orbits of the planets are ellipses is usually given 
by clever calculus tricks, which might leave the innocent student 
with a feeling of black magic, although opinions can differ.
For example, Herbert Goldstein describes this proof as the
``simplest way to integrate the equation for the orbit", see
Section 3.7 of his excellent text book on classical
mechanics \cite{Goldstein 1980}.

In the preparation of our master class we found a proof, that
was more geometric in nature, and based on the focus-focus 
characterization of ellipses \cite{van Haandel--Heckman 2009}.
After the standard initial discussion in Section 2 of the conservation 
laws of angular momentum and total energy, and their consequences
for the Kepler problem, our proof will be recalled in the Section 3. 
An elegant alternative geometric proof based on the focus-directrix 
characterization of ellipses was given by Alexander Givental 
\cite{Givental 2016}. Several other proofs, like the original one
of Isaac Newton from 1687 and the one by Richard Feynman
from 1964, were discussed in modern mathematical language
in \cite{van Haandel--Heckman 2009}.

Recently I became aware of a paper by Otto Laporte on some
geometric properties of the Kepler ellipses through a fixed 
point in space \cite{Laporte 1970}. His results were obtained
while teaching classical mechanics during numerous years
in order to provide interesting exercises for students learning
the mathematics of the Kepler laws. His analytic results will be
conveniently derived in a geometric way in Section 4. 

The final Section 5 serves as an addendum to an old paper of mine  
on the quantization of the Kovalevsky top \cite{Heckman 1998}.

I would like to thank Rainer Kaenders and the anonymous referee 
for useful comments.

\section{The familiar conservation laws}
\label{The familiar conservation laws}

Let $\vect{r}$ be the radius vector of a point in $\mathbb{R}^3$
and let the scalar $r$ denotes its length. If $\vect{r}$ moves in
time $t$ then $\dot{\vect{r}}$ denotes its velocity and
$\ddot{\vect{r}}$ its acceleration. As usual the dot always stands 
for the derivative with respect to time. The \emph{Kepler problem} 
studies the solutions of Newton's equation of motion
\[ \mu\ddot{\vect{r}}=\vect{F} \]
for an inverse square force field  $\vect{F}=-k\vect{r}/r^3$  defined
on $\mathbb{R}^3$ minus the origin.  The vector $\vect{r}$ 
describes the relative motion of a particle with mass $m$ around 
another particle with mass $M$. The parameter $\mu=mM/(m+M)$ 
is called the reduced mass and $k=GmM$ the coupling constant,
with $G$ Newton's universal gravitational constant.

The second law of Kepler that the motion is planar and that the 
radius vector traces out equal areas in inequal times is easy to prove. 
Moreover it holds for a general central force field $\vect{F}$, 
that is a force field of the form
\[ \vect{F}(\vect{r})=f(\vect{r})\vect{r}/r   \]
with $f$ a scalar function on $\mathbb{R}^3$ minus the origin.
Writing $\vect{p}=\mu\dot{\vect{r}}$ for the momentum vector 
it follows from the Leibniz product rule that the \emph{angular momentum} 
vector $\vect{L}=\vect{r}\times\vect{p}$ is conserved, which in case 
$\vect{L}\neq\vect{0}$ implies that the motion takes place in the plane 
perpendicular to $\vect{L}$.
Since the area of the surface traced out by the radius vector $\vect{r}$ in
a time interval $t_0<t<t_1$ is equal to
\[  \tfrac12\,\int_{t_0}^{t_1}|\vect{r}\times\dot{\vect{r}}|\,dt=
    L(t_1-t_0)/(2\mu) \]
we conclude that the radius vector $\vect{r}$ in a central force field 
sweeps out equal areas in equal times.

If the central force field $\vect{F}$ is in addition spherically symmetric,
that is 
\[  \vect{F}(\vect{r})=f(r)\vect{r}/r  \]
with $f$ a scalar function on $\mathbb{R}_+$, then the potential function
$V$ is defined by
\[ V(r)=-\int\,f(r)dr \]
and satisfies $d\{V(r)\}/dt=-f(r)(\vect{r}\cdot\dot{\vect{r}})/r$ by the chain rule.  
In turn this implies that the \emph{total energy} 
\[ H=p^2/(2\mu)+V(r)  \]
is conserved for solutions of Newton's equation of motion. For the
Newtonian force field $f(r)=-k/r^2$ the potential function
becomes $V(r)=-k/r$. 

\section{A geometric focus-focus proof}
\label{A geometric focus-focus proof}

In this section an ellipse will be the geometric locus of points
in a plane for which the sum of the distances to two given points
is constant. The two given points are called the foci, and the 
sum of the distances is denoted $2a$ and called the major axis. 
The distance between the given foci is denoted $2c$, and 
$2b>0$, defined by $a^2=b^2+c^2$, is called the minor axis.
The quotient $0\leq e=c/a\leq1$ is called the eccentricity of the 
ellipse.  If $e=0$ then the ellipse becomes a circle, while if 
$e=1$ then the ellipse degenerates to a line segment.

Let us continue the discussion at the end of the previous section,
and let us assume throughout this section that both 
$\vect{L}\neq\vect{0}$ (excluding collinear motion) and $H<0$ are fixed.
Consider the following figure of the plane perpendicular to $\vect{L}$.
The circle $\mathcal{C}$ with center $\vect{0}$ and radius $-k/H>0$ is 
the boundary of a disc where motion with fixed energy $H<0$ can take 
place. Indeed, we have
\[ H=p^2/(2\mu)-k/r\geq -k/r  \]
and so $r\leq -k/H$ with equality if and only if $p=0$. 
The solutions $t\mapsto\vect{r}$ of the Kepler problem starting from
rest at points of $\mathcal{C}$ fall straight onto the origin $\vect{0}$.
For this reason $\mathcal{C}$ is called the \emph{fall circle}
\cite{Thomas}.

Let $\vect{s}=-k \vect{r}/(rH)$ be the projection of $\vect{r}$ from 
the center $\vect{0}$ on this circle $\mathcal{C}$. 
The line $\mathcal{L}$ through $\vect{r}$ with direction vector
$\vect{p}$ is the tangent line to the orbit $\mathcal{E}$ at position 
$\vect{r}$ with momentum $\vect{p}$. Let $\vect{t}$ be the orthogonal 
reflection of the point $\vect{s}$ in the tangent line $\mathcal{L}$.
As time varies, the position vector $\vect{r}$ moves along the 
orbit $\mathcal{E}$ and also $\vect{p}=\mu\dot{\vect{r}}$ and 
$\mathcal{L}$ move along with it, and likewise the point $\vect{s}$ 
moves along the fall circle $\mathcal{C}$. It is a good question to 
investigate how the point $\vect{t}$ moves.

\begin{center}
\includegraphics{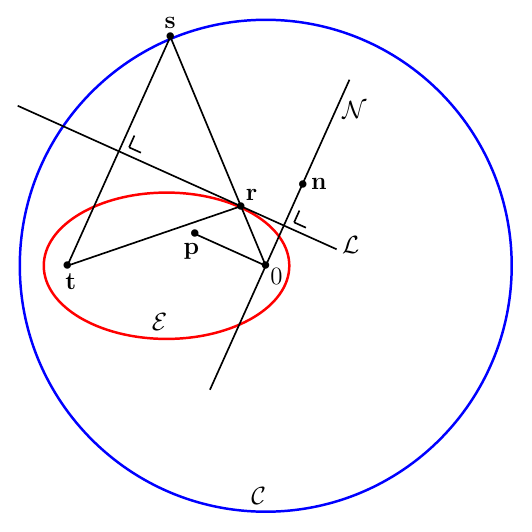}
\end{center}

\begin{theorem}\label{tconservedtheorem}
The point $\vect{t}$ is equal to $\vect{K}/(\mu H)$ with
\[ \vect{K}=\vect{p}\times\vect{L}-k\mu\vect{r}/r \]
the so called \emph{Lenz vector}. The Lenz vector $\vect{K}$ and 
therefore also the vector $\vect{t}$ are conserved quantities for the 
Kepler problem.
\end{theorem}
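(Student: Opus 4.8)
The statement contains two essentially independent assertions: the pointwise identity $\vect{t}=\vect{K}/(\mu H)$, and the conservation of $\vect{K}$ (whence, since $H$ is itself conserved, also of $\vect{t}$). The plan is to prove them in that order.

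For the identity I would compute the reflection explicitly inside the plane perpendicular to $\vect{L}$, which contains $\vect{0},\vect{r},\vect{p},\vect{s}$ and the tangent line $\mathcal{L}$. A unit normal to $\mathcal{L}$ in this plane is
\[ \vect{n}=\frac{\vect{p}\times\vect{L}}{Lp}, \]
since $\vect{p}\times\vect{L}$ lies in the plane, is orthogonal to the direction $\vect{p}$ of $\mathcal{L}$, and has length $Lp$ because $\vect{p}\perp\vect{L}$. Reflection of $\vect{s}$ in $\mathcal{L}$ then reads $\vect{t}=\vect{s}-2\big((\vect{s}-\vect{r})\cdot\vect{n}\big)\vect{n}$. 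Using $\vect{s}-\vect{r}=-\tfrac{k+rH}{rH}\vect{r}$ together with the scalar triple product identity $\vect{r}\cdot(\vect{p}\times\vect{L})=(\vect{r}\times\vect{p})\cdot\vect{L}=L^{2}$, one obtains
\[ \vect{t}=\vect{s}+\frac{2(k+rH)}{rH\,p^{2}}\,\vect{p}\times\vect{L}. \]
Now the energy relation $H=p^{2}/(2\mu)-k/r$, i.e. $p^{2}=2\mu(k+rH)/r$, makes the scalar coefficient collapse to $1/(\mu H)$; substituting $\vect{s}=-k\vect{r}/(rH)$ yields $\vect{t}=\big(\vect{p}\times\vect{L}-k\mu\vect{r}/r\big)/(\mu H)=\vect{K}/(\mu H)$, as claimed.

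For the conservation of $\vect{K}$ I would differentiate in time. By Section~\ref{The familiar conservation laws} we have $\dot{\vect{L}}=\vect{0}$, and Newton's equation gives $\dot{\vect{p}}=\vect{F}=-k\vect{r}/r^{3}$, so $\tfrac{d}{dt}(\vect{p}\times\vect{L})=-\tfrac{k}{r^{3}}\,\vect{r}\times(\vect{r}\times\vect{p})$. Expanding the double cross product by $\vect{r}\times(\vect{r}\times\vect{p})=(\vect{r}\cdot\vect{p})\vect{r}-r^{2}\vect{p}$ and using $\vect{r}\cdot\vect{p}=\mu\,\vect{r}\cdot\dot{\vect{r}}=\mu r\dot r$ turns the right-hand side into $k\mu\big(\dot{\vect{r}}/r-\dot r\,\vect{r}/r^{2}\big)=k\mu\,\tfrac{d}{dt}(\vect{r}/r)$, which is exactly the time derivative of the second term $k\mu\vect{r}/r$ of $\vect{K}$. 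Hence $\dot{\vect{K}}=\vect{0}$, and then $\dot{\vect{t}}=\vect{0}$ since $H$ is constant on solutions.

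I do not expect a genuine obstacle: both parts are a few lines of vector algebra. The only delicate point is the reflection step — one must pick the correct in-plane unit normal to $\mathcal{L}$ and keep track of the signs coming from $H<0$ and from the several triple products, so that the outcome coincides with the Lenz vector exactly and not merely up to sign or scale. It is worth noting that the identity exhibits the theorem as the analytic counterpart of the focus--focus construction underlying Section~\ref{A geometric focus-focus proof}: the fall circle $\mathcal{C}$ has radius $-k/H=2a$, the point $\vect{s}$ lies on $\mathcal{C}$ along the ray from $\vect{0}$ through $\vect{r}$, and $\mathcal{L}$ is by construction the perpendicular bisector of the segment $[\vect{s},\vect{t}]$, so that the conserved point $\vect{t}$ is precisely the second focus of the orbit $\mathcal{E}$.
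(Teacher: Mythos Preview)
Your argument is correct and follows essentially the same route as the paper: the reflection formula $\vect{t}=\vect{s}-2((\vect{s}-\vect{r})\cdot\vect{n})\vect{n}/n^{2}$ with $\vect{n}$ along $\vect{p}\times\vect{L}$, the scalar triple product $\vect{r}\cdot(\vect{p}\times\vect{L})=L^{2}$, and the energy relation to collapse the coefficient to $1/(\mu H)$. The only difference is cosmetic (you normalise $\vect{n}$, the paper does not), and you supply the explicit verification of $\dot{\vect{K}}=\vect{0}$ that the paper leaves to the reader.
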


\begin{proof}
The line $\mathcal{N}$ spanned by $\vect{n}=\vect{p} \times \vect{L}$
is perpendicular to $\mathcal{L}$.
The point $\vect{t}$ is obtained from $\vect{s}=-k \vect{r}/(rH)$ by
subtracting twice the orthogonal projection of $\vect{s}-\vect{r}$ on 
the line $\mathcal{N}$, and therefore
\[ \vect{t}=\vect{s}-2((\vect{s}-\vect{r})\cdot\vect{n})\vect{n}/n^2. \]
Using
$ \vect{u}\cdot(\vect{v}\times\vect{w})=(\vect{u}\times\vect{v})\cdot\vect{w}$
for all vectors $\vect{u},\vect{v},\vect{w}$ in $\mathbb{R}^3$ we get
\[ 2(\vect{r}-\vect{s})\cdot\vect{n}=
2(H+k/r)\vect{r}\cdot(\vect{p}\times\vect{L})/H=p^2L^2/(\mu H) \]
and since $n^2=p^2L^2$ we conclude that
\[ \vect{t}=-k\vect{r}/(rH)+\vect{n}/(\mu H) = \vect{K}/(\mu H) \]
with $\vect{K} = \vect{p}\times\vect{L}-k\mu\vect{r}/r$ the Lenz vector.
The second claim that $\dot{\vect{K}}=0$ is derived by a straightforward 
computation using the Leibniz product rule for differentiation, and is 
left to the reader as an exercise.
\end{proof}

\begin{corollary}\label{ellipsetheorem} 
The orbit $\mathcal{E}$ is an ellipse with foci $\vect{0}$ and $\vect{t}$, 
and major axis equal to $2a=-k/H$.
\end{corollary}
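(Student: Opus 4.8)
The plan is to reduce the statement to a direct verification of the focus–focus equation. By the definition of ellipse adopted in Section~\ref{A geometric focus-focus proof}, to prove that $\mathcal{E}$ is the ellipse with foci $\vect{0}$ and $\vect{t}$ and major axis $2a=-k/H>0$, it suffices to show that every position $\vect{r}$ on the orbit satisfies $\abs{\vect{r}-\vect{0}}+\abs{\vect{r}-\vect{t}}=-k/H$, and then to argue that the orbit actually exhausts that ellipse. The ingredients are already in place: the fall circle $\mathcal{C}$ of radius $-k/H$, the point $\vect{s}\in\mathcal{C}$, the tangent line $\mathcal{L}$, and, crucially, the fact from Theorem~\ref{tconservedtheorem} that $\vect{t}=\vect{K}/(\mu H)$ is one and the same point for all times.

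First I would locate $\vect{r}$ relative to $\vect{s}$. Since $k>0$ and $H<0$, the point $\vect{s}=-k\vect{r}/(rH)$ is the positive multiple of $\vect{r}$ of length $-k/H$; together with the inequality $r\leq -k/H$ already derived from $H\geq -k/r$, this puts $\vect{r}$ on the segment joining $\vect{0}$ to $\vect{s}$, whence $r+\abs{\vect{s}-\vect{r}}=\abs{\vect{s}}=-k/H$. Second, I would use the reflection: $\vect{t}$ is by construction the mirror image of $\vect{s}$ in $\mathcal{L}$, and $\vect{r}$ lies on $\mathcal{L}$; since an orthogonal reflection is an isometry fixing its axis pointwise, $\abs{\vect{r}-\vect{t}}=\abs{\vect{r}-\vect{s}}$. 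Combining the two displays gives $r+\abs{\vect{r}-\vect{t}}=-k/H$ at every point of the orbit, and because $\vect{t}$ is constant this says exactly that $\mathcal{E}$ is contained in the ellipse with foci $\vect{0},\vect{t}$ and major axis $2a=-k/H$.

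The remaining point — and the only place where anything beyond the two-line computation is needed — is to promote this inclusion to an equality. Here I would note that the orbit is a nonconstant integral curve of the Kepler vector field (nonconstant because $\vect{L}\neq\vect{0}$), that it stays inside the disc of radius $-k/H$ and never meets the origin, hence is defined for all time; a complete nonconstant motion trapped on a topological circle missing the origin must be periodic and therefore sweep out the whole conic. I would also record at this stage that the eccentricity $e=\abs{\vect{t}}/(2a)=\abs{\vect{K}}/(k\mu)$ satisfies $e<1$, via the identity $\abs{\vect{K}}^2=k^2\mu^2+2\mu HL^2$ and $H<0$, so the conic is a genuine bounded ellipse and not a degenerate segment. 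Everything else is immediate from Theorem~\ref{tconservedtheorem}, so I expect the write-up to be short, with the completeness/periodicity remark being the main thing one should not omit.
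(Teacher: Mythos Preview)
Your argument is correct and follows essentially the same route as the paper: use that $\vect{r}$ lies on the segment from $\vect{0}$ to $\vect{s}$ together with the reflection $|\vect{r}-\vect{t}|=|\vect{r}-\vect{s}|$ to obtain $|\vect{r}-\vect{0}|+|\vect{r}-\vect{t}|=-k/H$, and then invoke the constancy of $\vect{t}$ from Theorem~\ref{tconservedtheorem}. The paper's proof is just these two lines; your additional checks that the orbit actually fills the ellipse and that $e<1$ are welcome rigor that the paper leaves implicit.
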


\begin{proof}
Since orthogonal reflections preserve lengths we have
\[ \abs{\vect{t}-\vect{r}}+\abs{\vect{r}-\vect{0}} =
   \abs{\vect{s}-\vect{r}}+\abs{\vect{r}-\vect{0}} =
   \abs{\vect{s}-\vect{0}} = -k/H. \]
Hence $\mathcal{E}$ is an ellipse with foci $\vect{0}$ and $\vect{t}$, 
and with major axis $2a=-k/H$.
\end{proof}

This geometric proof of the law of ellipses is taken from
\cite{van Haandel--Heckman 2009}. The conserved vector 
$\vect{t} = \vect{K}/\mu H$ is a priori well motivated both in geometric 
and physical terms. In most text books on classical mechanics, 
like the one by Herbert Goldstein \cite{Goldstein 1980}, 
or in the original article by Wilhelm Lenz \cite{Lenz 1924} the vector 
$\vect{K}$ is just written down out of the blue and its motivation 
comes only a posteriori from the conservation law 
$\dot{\vect{K}}=\vect{0}$ and as a vector pointing in the direction 
opposite to the focus $\vect{t}$ of the elliptical orbit $\mathcal{E}$.

The vector $\vect{K}$ has been (re)discovered many times before, going 
back to Hermann and Laplace and others
\cite{van Haandel--Heckman 2009}. In the literature it is 
commonly called the Runge--Lenz vector, or also just the Lenz vector.
Pauli introduced a quantized version of the Lenz vector to give an  
elegant derivation of the Balmer formulae for the hydrogen spectrum 
\cite{Pauli 1926}, \cite{Van der Waerden 1968}. Pauli did this work 
in the fall of 1925 at Hamburg, where he was assistent with Lenz.

By definition we find $e=2c/(2a)=-K/(\mu H):-k/H=K/(k\mu)$ for the 
eccentricity of $\mathcal{E}$. The square length of the Lenz vector
 is equal to
\[ \vect{K}\cdot\vect{K}=
   (\vect{p}\times\vect{L})\cdot(\vect{p}\times\vect{L})
   -2(\vect{p}\times\vect{L})\cdot(k\mu\vect{r}/r)+k^2\mu^2=
   2\mu HL^2+k^2\mu^2 \]
by straightforward inspection. If $2c$ is the distance between the 
two foci of the elliptical orbit $\mathcal{E}$ then
\[ 4c^2=\vect{t}\cdot\vect{t}=(2\mu HL^2+k^2\mu^2)/(\mu^2H^2) \]
and together with $4a^2=4b^2+4c^2=k^2/H^2$ we arrive at 
$4b^2=-2L^2/(\mu H)$.

The area of the region bounded inside $\mathcal{E}$
is $\pi ab$, and therefore
\[ \pi ab=LT/2\mu \]
with $T$ the period of the orbit. Hence we obtain
\[ \frac{a^3}{T^2}=\frac{aL^2}{4\pi^2b^2\mu^2}=
   \frac{-2ab^2\mu H}{4\pi^2b^2\mu^2}=
   \frac{k}{4\pi^2\mu}=\frac{\mathrm{G}(m+M)}{4\pi^2} \]
using $k=\mathrm{G}mM$ and $\mu=mM/(m+M)$. Since the mass $m$ of
any planet is negligible compared to the mass $M$ of the sun we
conclude that the ratio $a^3/T^2$ is the same for all planets,
which is how Kepler formulated his harmonic law. 
This ends our discussion of the three Kepler laws: the ellipse law,
the area law and the harmonic law.

\section{All Kepler ellipses through a fixed point}
\label{All Kepler ellipses through a fixed point}

Let us continue with the notation of the previous section, that is
let us fix an energy $H=p^2/2\mu-k/q<0$ and let $\mathcal{C}$ be 
the falling circle with center at the origin $\vect{0}$ and radius 
$-k/H$. Otto Laporte asked himself the question what can be said
about the one parameter family $\mathcal{F}$ of all Kepler ellipses 
$\mathcal{E}$ having that same fixed energy $H<0$ and passing 
through a fixed point $\vect{r}$ in space \cite{Laporte 1970}. 
Our geometric approach for the Kepler problem answers these 
questions rather easily.

For example, what is the locus $\mathcal{T}$ of the foci 
$\vect{t}$ as these Kepler ellipses through the fixed point 
$\vect{r}$ vary? The geometry gives a quick answer, because
\[ |\vect{t}-\vect{r}|= |\vect{s}-\vect{r}|=s-r=-k/H-r=2a-r \]
and so $\vect{t}$ traverses a circle with center $\vect{r}$
and radius $2a-r$. The ellipse in this one parameter family
with smallest eccentricity $e=1+2Hr/k$ is the one with $\vect{r}$ 
at its perihelion and $\vect{r}-\vect{s}$ at its aphelion, 
while the one with largest eccentricity $e=1$ is the fall
from standstill at $\vect{s}$ reaching $\vect{0}$ in finite time $T/2$ 
with infinite velocity. Indeed, all Kepler ellipses with the same energy 
$H$ have the same major axes $2a=-k/H$, and hence also the same 
period $T$ by the harmonic law. In particular all motions starting  
at $\vect{r}$ at the same time return at $\vect{r}$ simultaneously.

\begin{center}
\includegraphics{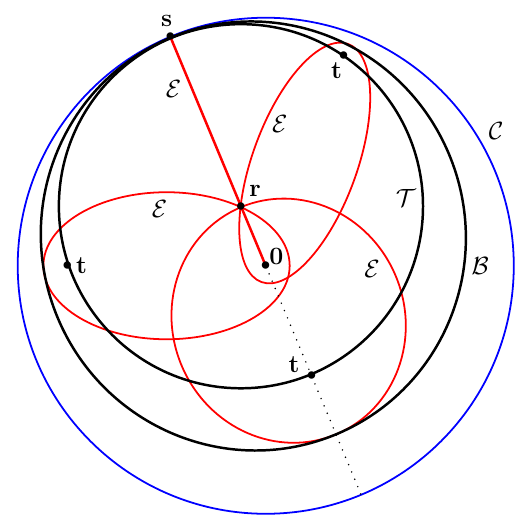}
\end{center}

Another question that Laporte posed is to describe the locus 
$\mathcal{B}$ of points that bounds the region swept out by all 
Kepler ellipses through the fixed point $\vect{r}$. If $\vect{q}$ 
is a point on such an ellipse $\mathcal{E}$ with focus $\vect{t}$ 
then
\[ q+|\vect{q}-\vect{r}|\leq q+|\vect{q}-\vect{t}|+
   |\vect{t}-\vect{r}|+r-r=-k/H-k/H-r=4a-r \]
by the triangle inequality, and equality holds if $\vect{t}$ lies
on the line segment from $\vect{q}$ to $\vect{r}$. Hence the region
swept out by these Kepler ellipses through $\vect{r}$ with energy 
$H<0$ is bounded by an ellipse with foci $\vect{0}$ and $\vect{r}$ 
and major axis $4a-r$.  

His last question deals with the directrices of $\mathcal{E}$
with respect to the origin, as $\mathcal{E}$ varies in the family
$\mathcal{F}$ of Kepler ellipses through the fixed point $\vect{r}$. 
The directrix $\mathcal{D}$ of such an $\mathcal{E}$ with respect 
to the origin is given by $\vect{d}+\vect{K}^{\perp}$ with
$\vect{d}=L^2\vect{K}/K^2$ and $\vect{K}^{\perp}$ the orthogonal
complement of $\vect{K}$. Indeed the distance from $\vect{r}$
to this directrix $\mathcal{D}$ is equal to
\[ (\vect{d}-\vect{r})\cdot\vect{K}/K=
   (L^2-\vect{r}\cdot\vect{K})/K=k\mu r/K=r/e \]
with $e=K/(k\mu)$ the eccentricity of $\mathcal{E}$, as should.   
The degenerate ellipse $\mathcal{E}$ through $\vect{r}$ with 
maximal eccentricity $e=1$ has directrix equal to $\vect{r}^{\perp}$ 
while the ellipse $\mathcal{E}$ through $\vect{r}$ with minimal 
eccentricity $e=1+2Hr/k=(a-r)/a$ has directrix equal to
\[ (1+a/(a-r))\vect{r}+\vect{r}^{\perp}  \]
at least if $r\neq a$. 

Let us assume for the rest of this section that $0<r<a$ , 
which in turn implies that the complement of the region swept out by
this family of directrices is bounded. Let $\mathbf{E}$ denote the
curve bounding that bounded complement. A natural Ansatz would 
be that $\mathbf{E}$ is an ellipse with foci $\vect{r}$ and 
$\vect{u}=a\vect{r}/(a-r)$ and with long axis equal to $(2a-r)r/(a-r)$.

\begin{center}
\includegraphics{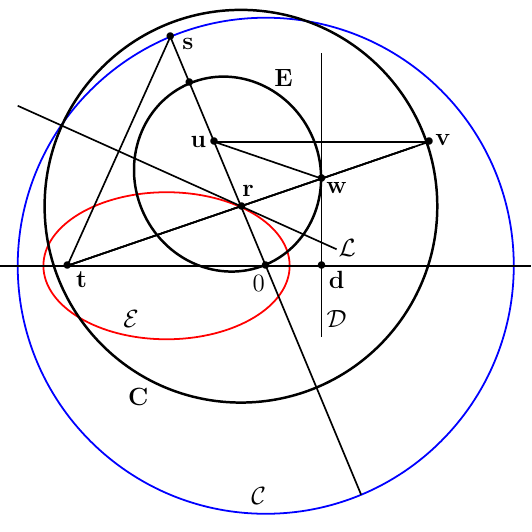}
\end{center}

\begin{theorem}
The orthogonal reflection of the vector $\vect{u}=a\vect{r}/(a-r)$
in the directrix $\mathcal{D}=\vect{d}+\vect{K}^{\perp}$ of the
Kepler ellipse $\mathcal{E}$ through $\vect{r}$ is equal to 
\[ \vect{v}=a\vect{r}/(a-r)-r\vect{t}/(a-r),  \]
which in turn implies that $\vect{v}-\vect{r}=r(\vect{r}-\vect{t})/(a-r)$.
In particular we get $|\vect{v}-\vect{r}|=(2a-r)r/(a-r)$ and so
$\vect{v}$ moves along a circle $\mathbf{C}$ with center $\vect{r}$ 
and radius $(2a-r)r/(a-r)$ as $\mathcal{E}$ moves in the family 
$\mathcal{F}$ of Kepler ellipses through $\vect{r}$. 
Hence this family of directrices of $\mathcal{E}$ is the family of tangents 
to an ellipse $\mathbf{E}$ with foci $\vect{r}$ and $\vect{u}=a\vect{r}/(a-r)$, with long axis equal to $(2a-r)r/(a-r)$ and with eccentricity 
$[r^2/(a-r)]:[(2a-r)r/(a-r)]=r/(2a-r)$.
\end{theorem}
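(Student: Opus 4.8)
The plan is to prove the reflection formula by a direct computation with the Lenz vector, and then to recognise the family of directrices as a tangent family by the optical property of the ellipse.

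First I would write down the orthogonal reflection in the directrix explicitly. All the relevant vectors $\vect{r},\vect{t},\vect{u},\vect{K},\vect{d}$ lie in the plane perpendicular to $\vect{L}$ (note $\vect{K}\cdot\vect{L}=0$), so within that plane the reflection of a point $\vect{x}$ in the affine line $\mathcal{D}=\vect{d}+\vect{K}^{\perp}$ is
\[ \vect{x}\;\longmapsto\;\vect{x}-2\,\frac{(\vect{x}-\vect{d})\cdot\vect{K}}{K^2}\,\vect{K}. \]
Taking $\vect{x}=\vect{u}=a\vect{r}/(a-r)$ and inserting $\vect{d}=L^2\vect{K}/K^2$, the computation reduces to evaluating $(\vect{u}-\vect{d})\cdot\vect{K}=\tfrac{a}{a-r}\,\vect{r}\cdot\vect{K}-L^2$. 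Here I would use the identity $\vect{r}\cdot\vect{K}=\vect{r}\cdot(\vect{p}\times\vect{L})-k\mu r=L^2-k\mu r$ (already exploited in the computation of the focus--directrix distance), the value $\vect{K}\cdot\vect{K}=2\mu HL^2+k^2\mu^2$, and $2a=-k/H$, hence $2\mu H=-k\mu/a$. A short simplification gives $(\vect{u}-\vect{d})\cdot\vect{K}=\tfrac{r}{a-r}(L^2-ak\mu)$ and $K^2=-\tfrac{k\mu}{a}(L^2-ak\mu)$, so $2(\vect{u}-\vect{d})\cdot\vect{K}/K^2=-2ar/(k\mu(a-r))$. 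Substituting back and using $\vect{K}=\mu H\vect{t}=-(k\mu/2a)\vect{t}$ collapses the expression to $\vect{v}=a\vect{r}/(a-r)-r\vect{t}/(a-r)$, which is the first assertion. Then $\vect{v}-\vect{r}=r(\vect{r}-\vect{t})/(a-r)$ is immediate, and $|\vect{v}-\vect{r}|=(2a-r)r/(a-r)$ follows from $|\vect{t}-\vect{r}|=2a-r$ established at the start of this section.

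For the last assertion I would argue geometrically. As $\mathcal{E}$ runs through $\mathcal{F}$ the focus $\vect{t}$ runs through the entire circle of centre $\vect{r}$ and radius $2a-r$, so by the relation just proved $\vect{v}$ runs through the entire circle $\mathbf{C}$ of centre $\vect{r}$ and radius $R:=(2a-r)r/(a-r)$. Moreover $|\vect{u}-\vect{r}|=r^2/(a-r)<R$ because $r<2a$, so $\vect{u}$ lies strictly inside $\mathbf{C}$. Since $\vect{v}$ is the reflection of $\vect{u}$ in $\mathcal{D}$, the line $\mathcal{D}$ is precisely the perpendicular bisector of the segment joining $\vect{u}$ to $\vect{v}\in\mathbf{C}$. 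By the classical reflection (optical) property of the ellipse — equivalently, the fact that the locus of mirror images of one focus across the tangent lines of an ellipse is the circle about the other focus of radius the major axis, together with its converse — the perpendicular bisectors of the segments joining a point $\vect{u}$ interior to a circle $\mathbf{C}$ of centre $\vect{r}$ to the points of $\mathbf{C}$ constitute exactly the tangent lines of the ellipse $\mathbf{E}=\{\vect{x}:|\vect{x}-\vect{r}|+|\vect{x}-\vect{u}|=R\}$, each such line touching $\mathbf{E}$ at the point where the segment from $\vect{r}$ to the corresponding point of $\mathbf{C}$ meets it. Hence the directrices of the ellipses in $\mathcal{F}$ are exactly the tangents of $\mathbf{E}$, the foci of $\mathbf{E}$ are $\vect{r}$ and $\vect{u}=a\vect{r}/(a-r)$, its long axis is $R=(2a-r)r/(a-r)$, and its eccentricity is $|\vect{u}-\vect{r}|/R=r/(2a-r)$.

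The reflection computation is essentially mechanical once the two identities for $\vect{r}\cdot\vect{K}$ and $\vect{K}\cdot\vect{K}$ are in hand, so the only real content I anticipate is the final step: recognising the directrix family as the tangent family of $\mathbf{E}$ via the optical property, and checking that $\vect{u}$ sits strictly inside $\mathbf{C}$ (using $0<r<a$) so that one genuinely obtains a non-degenerate ellipse rather than a parabola or a hyperbola-like locus.
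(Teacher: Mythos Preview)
Your proof is correct and follows essentially the same route as the paper: the same reflection formula, the same identities $\vect{r}\cdot\vect{K}=L^2-k\mu r$, $K^2=2\mu HL^2+k^2\mu^2$, $\vect{K}=\mu H\vect{t}$ and $2a=-k/H$ leading to $\vect{v}=a\vect{r}/(a-r)-r\vect{t}/(a-r)$, and then the optical property of the ellipse to identify the directrix family as a tangent family (the paper abbreviates this last step to ``just like the argument of the previous section'', which you have spelled out in more detail). One tiny slip: the inequality $|\vect{u}-\vect{r}|=r^2/(a-r)<R=(2a-r)r/(a-r)$ reduces to $r<2a-r$, i.e.\ $r<a$, not merely $r<2a$; but this is precisely the standing hypothesis $0<r<a$, so the conclusion stands.
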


\begin{proof}
The orthogonal reflection $\vect{v}$ of $\vect{u}=a\vect{r}/(a-r)$ 
with mirror the directrix $\mathcal{D}=\vect{d}+\vect{K}^{\perp}$ 
is given by the formula
\[ \vect{v}=\vect{u}-2((\vect{u}-\vect{d})\cdot\vect{K})\vect{K}/K^2,  \]
and the desired rewriting goes as follows. Since
\[ \vect{u}\cdot\vect{K}=a\vect{r}\cdot\vect{K}/(a-r)=a(L^2-k\mu r)/(a-r),\;
\vect{d}\cdot\vect{K}=L^2 \]
we get
\[ 2((\vect{u}-\vect{d})\cdot\vect{K})\,\vect{K}=
   2r(L^2-ak\mu)\,\vect{K}/(a-r)=rK^2\,\vect{t}/(a-r)\,. \]
Here we have used
\[ 2a=-k/H,\;\vect{K}=\mu H\vect{t},\;K^2=2\mu HL^2+k^2\mu^2\,.\]
This proves that $\vect{v}=a\vect{r}/(a-r)-r\vect{t}/(a-r)$ and hence
we conclude that $|\vect{v}-\vect{r}|=r(2a-r)/(a-r)$. The rest of the
theorem follows just like the argument of the previous section.
\end{proof}

\begin{remark}
The ellipse $\mathbf{E}$ has eccentricity $r/(2a-r)$ and
so its directrix $\mathbf{D}$ with respect to the focus
$\vect{r}$ is equal to $-(2a-r)\vect{r}/r+\vect{r}^{\perp}$.
This suggests that in case $r=a$ the dual curve $\mathbf{E}$ 
becomes a parabola, and in case $a<r<2a$ the dual curve 
$\mathbf{E}$ becomes a hyperbola. We leave it to the interested
reader to show that the above geometric argument can be adapted
to include these cases as well.
\end{remark}

\section{Final remarks}
\label{Final remarks} 

In the fall of 1995 I spent a month at the Mittag Leffler Institute in 
Stockholm. In the impressive library I was brousing through the 
correspondences of G\"{o}sta Mittag Leffler with Sophie Kowalevski 
about her discovery of the famous integrable top, and later went 
down to the basement of the Institute to get myself a reprint of 
her Acta paper from 1889 \cite{Kowalevski 1889}. 
Motivated by my previous work with Eric 
Opdam on hypergeometric functions associated with root systems 
(which was partly motivated by understanding how the integrals of 
motion for the classical Calogero--Moser system could be lifted
to its quantization) I checked by trial and error that her classical 
integral of motion could be lifted to a conserved quantity for the 
corresponding quantum top, and wrote a short paper with the
algebraic details of the proof \cite{Heckman 1998}.

In 2005 I got a friendly letter of the Russian physicist Igor Komarov,
explaining that both the quantum integrability of the Kowalevski top
had been done long before in 1933 by Otto Laporte \cite{Laporte 1933},
and also that my approach by doing the calculations in the universal 
enveloping algebra of the Euclidean motion group of $\mathbb{R}^3$ 
had been anticipated by him in 1981 \cite{Komarov 1981} with several 
related results in the following years \cite{Komarov 2001}.
I should have written back then a short addendum to my paper
explaining my ignorance of this earlier work by Laporte and 
Komarov, but postponed this idea with the plan of
getting back to the quantum Kowalevski top and see if some better 
understanding of the corresponding spectral problem could be 
obtained. 

It did not work out that way as I failed in this attempt, and later I 
forgot about it, until I read a few years ago the autobiography 
``Der Teil und das Ganze'' of Werner Heisenberg. 
In Chapter 3 Heisenberg tells about his contacts with Wolfgang 
Pauli and Otto Laporte, which revitalized my interest in the person 
of Laporte. All three were graduate students of Arnold Sommerfeld 
in M\"{u}nchen with graduation years 1921 (P), 1923 (H) and 1924 (L).
Subsequently Laporte went as a postdoc to the National Bureau of 
Standards in  Maryland. In 1926 he joined the physics faculty at Ann
Arbor in Michigan as colleague of Sam Goudsmit and George Uhlenbeck,
and stayed there for the rest of his life. The paper on the Kepler 
ellipses through a fixed point in space of 1970 was one of his last,
written after many years of teaching classical mechanics. 
By shining some extra light now on this Kepler paper of Laporte  
I hope to have made up for the omission in my old work of 1998.

\noindent
Gert Heckman, Radboud University Nijmegen: g.heckman@math.ru.nl


\begin{thebibliography}{25}

\bibitem{Givental 2016}
Alexander Givental, Kepler's Laws and Conic Sections,
Arnold Math. Journal {\bf 2} (2016), 139-148.

\bibitem{Goldstein 1980}
Herbert Goldstein, Classical Mechanics, Addison-Wesley, 
Second Edition, 1980.

\bibitem{Goodstein--Goodstein 1996}
David L. Goodstein and Judith R. Goodstein,
Feynman's Lost Lecture: The Motion of Planets Around
the Sun, Norton and Company, New York, 1996.

\bibitem{van Haandel--Heckman 2009}
Maris van Haandel and Gert Heckman, Teaching the Kepler Laws for
Freshmen, The Mathematical Intelligencer {\bf 31}:2 (2009), 40-44. 

\bibitem{Heckman 1998}
Gert Heckman, Quantum Integrability of the Kovalevsky Top,
Indagationes Mathematicae {\bf 9}:3 (1998), 359-365.

\bibitem{Heisenberg 1969}
Werner Heisenberg, Der Teil und das Ganze, Piper Verlag, 
Berlin, 1969. 

\bibitem{Komarov 1981}
Igor V. Komarov, Kovalevskaya basis for the hydrogen atom,
Theor. Math. Phys. {\bf 47} (1981), 76-72.

\bibitem{Komarov 2001}
Igor V. Komarov, Remarks on Kowalevski's top, J. Phys. A {\bf 34}:11
(2001), 2111-2120.

\bibitem{Kowalevski 1889}
Sophie Kowalevski, Sur le Probl\`{e}me de la Rotation d' un Corps
Solide autour d'un Point Fixe, Acta Math. {\bf 12}:1 (1889), 177–232, 

\bibitem{Laporte 1933}
Otto Laporte, Note on Kowalewski's Top in Quantum Mechanics,
Physical Review {\bf 43} (1933), 548-551.

\bibitem{Laporte 1970}
Otto Laporte, On Kepler Ellipses Starting from a Point in Space, 
American Journal of Physics {\bf 38}:7 (1970), 837-840.

\bibitem{Lenz 1924}
Wilhelm Lenz, \"{U}ber den Bewegungsverlauf und Quantenzust\"{a}nde der 
gest\"{o}rten Keplerbewegung, Zeitschrift f\"{u}r Physik {\bf 24} 
(1924), 197-207.

\bibitem{Newton 1687}
Isaac Newton, The Principia, Mathematical Principles of Natural
Philosophy, New Translation by I. Bernard Cohen and Anne Whitman,
University of California Press, Berkeley, 1999. 

\bibitem{Pauli 1926}
W. Pauli, \"{U}ber das Wasserstoffspektrum vom Standpunkt der neuen
Quantenmechanik, Zeitschrift f\"{u}r Physik {\bf 36} (1926), 336-363.

\bibitem{Van der Waerden 1968}
B.L. van der Waerden, Sources of Quantum Mechanics,
Dover Publications, New York, 1968.

\bibitem{Thomas}
Oswald Thomas, Astronomie – Tatsachen und Probleme, 
Das Bergland-Buch, Salzburg, 1949.

\end{thebibliography}
\end{document}